\newcommand{\N}{\mathbb{N}}
\newcommand{\G}{\mathcal{G}}
\newcommand{\reg}{{\operatorname{reg}}}
\newcommand{\sing}{{\operatorname{sing}}}
\newcommand{\lp}{{\operatorname{lp}}}
\theoremstyle{theorem}
\newtheorem{theorem}{Theorem}[section]
\newtheorem{proposition}[theorem]{Proposition}
\newtheorem{corollary}[theorem]{Corollary}
\newtheorem{lemma}[theorem]{Lemma}
\theoremstyle{definition}
\newtheorem{remark}[theorem]{Remark}
\newtheorem{example}[theorem]{Example}
\title{Orbit equivalence of graphs and isomorphism of graph groupoids}
\author{Toke Meier Carlsen}
\address{Department of Science and Technology\\University of the Faroe Islands\\
N\'oat\'un 3\\ FO-100 T\'orshavn\\ Faroe Islands}
\email{toke.carlsen@gmail.com}
\author{Marius Lie Winger}
\address{Department of Teacher Education 1-7 and BA in Archive and Collection Management\\Norwegian University of Science and Technology\\Faculty of Teacher and Interpreter Education\\Postboks 8900\\NO-7491 Trondheim\\Norway}
\email{marius.l.winger@ntnu.no}
\date{\today}
\begin{document}

\maketitle

\begin{abstract}
	We show that the groupoids of two directed graphs are isomorphic if and only if the two graphs are orbit equivalent by an orbit equivalence that preserves isolated eventually periodic points. We also give a complete description of the (topological) isolated points of the boundary path space of a graph. As a result, we are able to show that the groupoids of two directed graphs with finitely many vertices and no sinks are isomorphic if and only if the two graphs are orbit equivalent, and that the groupoids of the stabilisations of two such graphs are isomorphic if and only if the stabilisations of the graphs are orbit equivalent.
\end{abstract}

\section{Introduction}

From a directed graph $E$, one can construct an ample groupoid $\G_E$ such that the $C^*$-algebra $C^*(\G_E)$ of $\G_E$ is isomorphic to the graph $C^*$-algebra $C^*(E)$, and, for any commutative ring $R$ with a unit, the Steinberg algebra $A_R(\G_E)$ is isomorphic to the Leavitt path algebra $L_R(E)$. This has recently been used in \cite{ABHS}, \cite{BCH}, \cite{BCW}, \cite{C}, \cite{CR}, and \cite{CRS} to explore the relationship between isomorphisms of graph $C^*$-algebras, isomorphisms of Leavitt path algebras, and isomorphisms of graph groupoids. It is therefore useful to be able to characterise when the groupoids of two directed graphs are isomorphic. 

In \cite{BCW}, the notion of \emph{orbit equivalence} of directed graphs was introduced as an analogue of continuous orbit equivalence of shifts of finite type \cite{Mat}, and it was shown that if the groupoids $\G_E$ and $\G_F$ of two directed graphs $E$ and $F$ are isomorphic, then the graphs $E$ and $F$ are orbit equivalent. It was also shown that if every cycle in $E$ and $F$ has an exit, then the converse holds, and that the converse does not hold in general. 

In \cite{AER}, the relationship between orbit equivalence of graphs, isomorphism of graph $C^*$-algebras, and isomorphism of graph groupoids was further explored, and by using the connection with graph $C^*$-algebras, it was shown that if $E$ and $F$ are two directed graphs, then the graph groupoids $\G_E$ and $\G_F$ are isomorphic if and only if $E$ and $F$ are orbit equivalent by an equivalence that maps isolated eventually periodic points to isolated eventually periodic points. 

In this paper, we prove the above mentioned result from \cite{AER} directly without using $C^*$-algebras. We also give a complete description of the topological isolated points of the boundary path space of a graph. As a result, we are able to show that if $E$ and $F$ are directed graphs with finitely many vertices and no sinks, then the groupoids of $E$ and $F$ are isomorphic if and only if $E$ and $F$ are orbit equivalent, and the groupoids of the stabilisations of $E$ and $F$ are isomorphic if and only if the stabilisations of $E$ and $F$ are orbit equivalent.

The rest of the paper is organised in the following way. In Section 2 we recall the definitions of a directed graph, the boundary path space of a directed graph, orbit equivalence of directed graphs, and the groupoid of a directed graph; and introduce notation. In Section 3 we characterise all (topological) isolated points in the boundary path space of a directed graph (Proposition~\ref{pro:a}) and illustrate the different types of isolated points in Example \ref{exam}. Finally, in Section 4 we state and prove our main result (Theorem~\ref{thm:b}) from which it follows that the groupoids of two directed graphs are isomorphic if and only if the two graphs are orbit equivalent by an orbit equivalence that preserves isolated eventually periodic points, and Corollary~\ref{cor:1} that says that the groupoids of two directed graphs with finitely many vertices and no sinks are isomorphic if and only if the two graphs are orbit equivalent, and that the groupoids of the stabilisations of two such graphs are isomorphic if and only if the stabilisations of the graphs are orbit equivalent.

\section{Preliminaries}

For the benefit of the reader we recall in this section the definitions of a directed graph, the boundary path space of a directed graph, orbit equivalence of directed graphs, and the groupoid of a directed graph; and we introduce notation. Most of this is standard and can be found in many other papers, for example \cite{AER}, \cite{BCW}, and \cite{CR}.

We let $\N$ denote the set of nonnegative integers (so $0\in\N$).

\subsection{Directed graphs}
A \emph{directed graph $E$} is a tuple $(E^0, E^1, r, s)$ where $E^0$ and $E^1$ are countable sets and $r$ and $s$ are functions from $E^1$ to $E^0$. Elements in $E^0$ are called \emph{vertices} and elements in $E^1$ are called \emph{edges}. The functions $r$ and $s$ are called the \emph{range function} and the \emph{source function} respectively, and for an edge $e\in E^1$ the vertices $r(e)$ and $s(e)$ are called the range and the source of $e$ respectively.

By forming a sequence of edges $\mu_1, \mu_2, \ldots, \mu_n$ in $E$ such that $r(\mu_i) = s(\mu_{i+1})$ for $1 \leq i \leq n-1$ we get a \emph{path} $\mu = \mu_1\mu_2\cdots\mu_n$. We denote by $|\mu|$ the length of the path $\mu$, and form the set $E^n$ consisting of all paths of length $n$. This notation is consistent with $E^0$ by considering vertices as paths of zero length. We can naturally extend the source and range maps to paths by setting $r(\mu) := r(\mu_n)$ and $s(\mu) := s(\mu_1)$. For any $v \in E^0$, we set $s(v) := r(v) := v$. If $v \in E^0$ and $n \in \N$, then $v E^n := \{ x \in E^n : s(x) = v \}$ and $E^n v := \{ x \in E^n : r(x) = v \}$. Further we let $E^*$ be the collection of all paths of finite length in $E$, i.e. $E^* = \bigcup_{n \in \N} E^n$. 

A vertex $v\in E^0$ is a \emph{sink} if $vE^1=\emptyset$, and an \emph{infinite emitter} if $vE^1$ is infinite. We define the \emph{singular vertices} of $E$ as $E_\sing^0 := \{v\in E^0:v\text{ is a sink or an infinite emitter}\}$ and the \emph{regular vertices} of $E$ as $E_\reg^0 := E^0\setminus E_\sing^0$.

A \emph{cycle} (sometimes called a \emph{loop} in the literature) in $E$ is a path $\gamma\in E^*$ such that $|\gamma|\ge 1$ and $s(\gamma)=r(\gamma)$. An edge $e\in E^1$ is an \emph{exit} to the loop $\gamma=\gamma_1\dots\gamma_{|\gamma|}$ if there exists $i$ such that $s(e)=s(\gamma_i)$ and $e\ne\gamma_i$. The graph $E$ is said to satisfy \emph{condition (L)} if every loop in $E$ has an exit. 

\subsection{The boundary path space}
An \emph{infinite path} is an infinite sequence $x = (x_i)_{i \in \N}$ where $x_i \in E^1$ and $r(x_i) = s(x_{i+1})$ for all $i \in \N$. We set $s(x) := s(x_0)$ and denote by $E^\infty$ the collection of all infinite paths in $E$.

The \emph{boundary path space $\partial E$} of $E$ is defined as 
$$\partial E := E^\infty \cup \{ \mu \in E^* : r(\mu) \in E_\sing^0 \}.$$
If $\mu=\mu_1\mu_2\cdots\mu_m\in E^*$, $x=x_1x_2\cdots\in E^\infty$ and $r(\mu)=s(x)$, then we let $\mu x$ denote the infinite path  $\mu_1\mu_2\cdots\mu_m x_1x_2\cdots \in E^\infty$ (if $\mu\in E^0$, then $\mu x=x$). 

For $\mu\in E^*$, the \emph{cylinder set} of $\mu$ is the set
\begin{equation*}
	Z(\mu):=\{\mu x\in\partial E:x\in r(\mu)\partial E\},
\end{equation*} 
where $r(\mu)\partial E:=\{x \in \partial E : r(\mu)=s(x)\}$. Given $\mu\in E^*$ and a finite subset $F\subseteq r(\mu)E^1$ we define
\begin{equation*}
	Z(\mu\setminus F):=Z(\mu)\setminus\left(\bigcup_{e\in F}Z(\mu e)\right).
\end{equation*}

The boundary path space $\partial E$ is a locally compact Hausdorff space with the topology given by the basis $\{Z(\mu\setminus F): \mu\in E^*,\ F\text{ is a finite subset of }r(\mu)E^1\}$, and each such $Z(\mu\setminus F)$ is compact and open (see \cite[Theorem 2.1 and Theorem 2.2]{Web}).

\subsection{Orbit equivalence}
For $n\in\N$, let $\partial E^{\ge n}:=\{x\in\partial E: |x|\ge n\}$. Then $\partial E^{\ge n}= \cup_{\mu \in E^n} Z(\mu)$ is an open subset of $\partial E$. We define the \emph{shift map} on $E$ to be the map $\sigma_E:\partial E^{\ge 1}\to\partial E$ given by $\sigma_E(x_1x_2x_3\cdots)=x_2x_3\cdots$ for $x_1x_2x_3\cdots\in\partial E^{\ge 2}$ and $\sigma_E(e)=r(e)$ for $e\in\partial E\cap E^1$. For $n\ge 1$, we let $\sigma_E^n$ be the $n$-fold composition of $\sigma_E$ with itself. We let $\sigma_E^0$ denote the identity map on $\partial E$. Then $\sigma_E^n$ is a local homeomorphism for all $n\in\N$. When we write $\sigma_E^n(x)$, we implicitly assume that $x\in\partial E^{\ge n}$.

An infinite path $x\in E^\infty\subseteq\partial E$ is said to be \emph{eventually periodic} if there are $n,p\in\N$ with $p>0$ such that $\sigma_E^{n+p}(x)=\sigma_E^n(x)$. For an eventually periodic infinite path $x$, denote by $\lp(x)$ the \emph{least period of $x$}, i.e. 
\begin{equation*}
\lp(x) := \min\{ p > 0 : \text{ there exists } m,n \in \mathbb{N} \text{ such that } p = m - n \text{ and } \sigma_E^{n}(x) = \sigma_E^m(x) \}.
\end{equation*}

Notice that $x\in E^\infty\subseteq\partial E$ is eventually periodic if and only if there are a finite path $\mu\in E^*$ and a cycle $\gamma\in E^*$ such that $x=\mu\gamma\gamma\gamma\dots$.   

Let $E$ and $F$ be directed graphs. A homeomorphism $h:\partial E\to\partial F$ is called an \emph{orbit equivalence} if there are continuous functions $k,l:\partial E^{\ge 1}\to\N$ and $k',l':\partial F^{\ge 1}\to\N$ such that
\begin{equation}\label{eq:1}
	\sigma_F^{k(x)}(h(\sigma_E(x)))=\sigma_F^{l(x)}(h(x))\text{ and }\sigma_E^{k'(y)}(h^{-1}(\sigma_F(y)))=\sigma_E^{l'(y)}(h^{-1}(y))
\end{equation}
for all $x\in\partial E^{\geq 1}$ and all $y\in\partial F^{\geq 1}$. The directed graphs $E$ and $F$ are said to be \emph{orbit equivalent} if there is an orbit equivalence $h:\partial E\to\partial F$. 

\subsection{The graph groupoid}
The \emph{graph groupoid} of a directed graph $E$ is the ample Hausdorff groupoid 
\begin{equation*}
	\G_E=\{(x,m-n,y): x,y\in\partial E,\ m,n\in\N,\text{ and } \sigma_E^m(x)=\sigma_E^n(y)\},
\end{equation*}
with product $(x,k,y)(w,l,z):=(x,k+l,z)$ if $y=w$ and undefined otherwise, and inverse given by $(x,k,y)^{-1}:=(y,-k,x)$. The topology of $\G_E$ is generated by subsets of the form $Z(U,m,n,V):=\{(x,k,y)\in\G_E:x\in U,\ k=m-n,\ y\in V,\ \sigma_E^m(x)=\sigma_E^n(y)\}$ where $m,n\in\N$, $U$ is an open subset of $\partial E^{\ge m}$ such that the restriction of $\sigma_E^m$ to $U$ is injective, and $V$ is an open subset of $\partial E^{\ge n}$ such that the restriction of $\sigma_E^n$ to $V$ is injective, and $\sigma_E^m(U)=\sigma_E^n(V)$. 

The map $x\mapsto (x,0,x)$ is a homeomorphism from $\partial E$ to the unit space $\G_E^0$ of $\G_E$. If we identify $\G_E^0$ and $\partial E$ by this homeomorphism, then the source and range maps $s,r:\G_E\to\partial E$ of $\G_E$ are given by $s((x,k,y))=y$ and $r((x,k,y))=x$. We denote by $c_E$ the continuous cocycle from $\G_E$ to $\mathbb{Z}$ given by $c_E((x,k,y))=k$.

A \emph{bisection} of a groupoid $\G$ is a subset $A\subseteq\G$ such that the restrictions of the source and range maps to $A$ are injective.  As shown in \cite{Renault}, each bisection $A$ of an étale groupoid $\G$ defines a homeomorphism $\alpha_A:s(A)\to r(A)$ by $\alpha_A(s(\eta))=r(\eta)$ for $\eta\in A$. Renault defines the \emph{pseudogroup} of an étale groupoid $\G$ to be the inverse semigroup 
$$\{\alpha_A:A\text{ is an open bisection of }\G\}.$$ 
As in \cite{AER} and \cite{BCW}, we write $\mathcal{P}_E$ for the pseudogroup of $\G_E$. For $\alpha\in\mathcal{P}_E$, we write $s(\alpha)$ for the domain of $\alpha$, and $r(\alpha)$ for the range of $\alpha$.

All isomorphisms between groupoids considered in this paper are, in addition to preserving
the groupoid structure, homeomorphisms.

\section{Isolated points of $\partial E$}

In this section we characterise all (topological) isolated points in the boundary path space of a directed graph.  

Let $E$ be a directed graph. A finite word $\mu\in E^*$ belongs to the boundary path space $\partial E$ if and only if $r(\mu)$ is a sink or an infinite emitter, and $\mu$ is an isolated point of $\partial E$ (i.e., $\{\mu\}$ is open in $\partial E$) if and only if $r(\mu)$ is a sink. An infinite path $x\in E^\infty\subseteq \partial E$ is an isolated point of $\partial E$ if and only if the set $\{n\in\N: |r(x_n)E^1|\ge 2\}$ is finite. We have in particular that an eventually periodic point $x=\mu\gamma\gamma\gamma\dots$ is isolated in $\partial E$ if and only if the cycle $\gamma$ does not have an exit. Notice that $E$ satisfies condition (L) if and only if that are no isolated eventually periodic points in $\partial E$.

We say that $x\in E^\infty$ is a \emph{wandering point} if the set $\{n\in\N: s(x_n)=v\}$ is finite for all $v\in E^0$.

We then have the following classification of the isolated points of the boundary path space $\partial E$.

\begin{proposition}\label{pro:a}
	Let $E$ be a directed graph. If $x$ is an isolated point of $\partial E$, then it is either finite (in which case $r(x)$ is a sink), eventually periodic or a wandering point.
\end{proposition}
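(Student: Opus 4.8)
The plan is to split on whether the isolated point $x$ is a finite or an infinite path, and to use the characterisations of isolated points recalled just before the statement. If $x\in E^*$ is finite, then the recalled fact that a finite word $\mu$ is isolated in $\partial E$ if and only if $r(\mu)$ is a sink immediately puts us in the first case, with $r(x)$ a sink. So all of the content lies in the infinite case.

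Suppose then that $x=x_0x_1x_2\cdots\in E^\infty$ is isolated but \emph{not} a wandering point; I would show that $x$ must be eventually periodic. Writing $v_n:=s(x_n)$ for the sequence of vertices traversed by $x$, so that $v_{n+1}=r(x_n)=s(x_{n+1})$, the recalled characterisation of isolated infinite paths says that $\{n\in\N:|r(x_n)E^1|\ge 2\}$ is finite. Hence there is $N\in\N$ such that $|v_mE^1|\le 1$ for every $m\ge N$; since $x_m$ is an edge emitted by $v_m$, in fact $|v_mE^1|=1$ for all $m\ge N$. In other words, beyond index $N$ the path $x$ is \emph{forced}: each vertex it visits emits a single edge, so the infinite path continuing from such a vertex is uniquely determined.

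Because $x$ is not a wandering point, there is a vertex $v$ with $\{n\in\N:v_n=v\}$ infinite; as only finitely many indices lie below $N$, the vertex $v$ occurs as $v_{n_0}$ and $v_{n_1}$ for some $N\le n_0<n_1$. Now I would argue that $\sigma_E^{n_0}(x)=\sigma_E^{n_1}(x)$. Indeed, $v=v_{n_0}=v_{n_1}$ emits a unique edge (as $n_0,n_1\ge N$), so $x_{n_0}=x_{n_1}$; this forces $v_{n_0+1}=v_{n_1+1}$, and an easy induction using $|v_mE^1|=1$ for all $m\ge N$ gives $x_{n_0+k}=x_{n_1+k}$ for every $k\ge 0$. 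Therefore $\sigma_E^{n_0}(x)=\sigma_E^{n_1}(x)$, and with $n:=n_0$ and $p:=n_1-n_0>0$ we obtain $\sigma_E^{n+p}(x)=\sigma_E^{n}(x)$, so $x$ is eventually periodic, as required.

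The only real work is in this last step, where the deterministic tail (each visited vertex having a single outgoing edge) is used to promote a single repeated vertex into a genuine coincidence of shifted paths; the main thing to get right is the indexing, ensuring both occurrences of $v$ lie in the forced region $m\ge N$ so that the continuation from $v$ is literally unique. Everything else is a direct appeal to the recalled characterisations and to the definitions of wandering and of eventually periodic points.
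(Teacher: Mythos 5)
Your proof is correct and follows essentially the same route as the paper: split off the finite case, use non-wandering to find a vertex visited infinitely often, and use the characterisation of isolated infinite paths to see that the tail of $x$ is forced, hence periodic. You are in fact slightly more careful than the paper's own (rather terse) argument, since you explicitly choose both occurrences of the recurrent vertex beyond the index $N$ past which every visited vertex emits a single edge.
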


\begin{proof}
	Suppose that $x$ is an isolated point of $\partial E$, and that it is not finite or a wandering point. Then there is a $v\in E^0$ such that the set $\{n\in\N: s(x_n)=v\}$ is infinite. Choose $n_1<n_2$ such that $s(x_{n_1})=s(x_{n_2})=v$, and set $\mu:=x_0x_1\dots x_{n_1-1}$ and $\gamma:=x_{n_1}\dots x_{n_1+1}\dots x_{n_2-1}$. Then $\mu,\gamma\in E^*$, $r(\mu)=s(\gamma)$, and $\gamma$ is a cycle in $E$. Since $x$ is an isolated point of $\partial E$ and the set $\{n\in\N: s(x_n)=v\}$ is infinite, it follows that $x=\mu\gamma\gamma\gamma\dots$, so $x$ is eventually periodic.
\end{proof}

Let $E$ and $F$ be directed graphs. Since an orbit equivalence $h:\partial E\to\partial F$ is a homeomorphism, it must necessarily map the set of isolated points of $\partial E$ onto the set of isolated points of $\partial F$. The following example shows that the three classes of isolated points mentioned in Proposition \ref{pro:a} can be mixed by an orbit equivalence. It also illustrates that the groupoids of directed graphs that are orbit equivalent, are not necessarily isomorphic (cf.~\cite[Example 5.2]{BCW}).

\begin{example} \label{exam}
	Consider the 3 directed graphs
	\begin{equation*}
		\begin{tikzpicture}
		    \def\vertex(#1) at (#2,#3){
		        \node[inner sep=0pt, circle, fill=black] (#1) at (#2,#3)
		        [draw] {.}; 
		 }
		    \vertex(11) at (0,0)
    
		    \vertex(21) at (0.8,0)
    
		    \vertex(31) at (1.6,0)
    
		    \vertex(41) at (2.4,0)
    
		    \vertex(51) at (4.8,0)
    
		    \vertex(61) at (5.6,0)
    
		    \vertex(71) at (6.4,0)
    
		    \vertex(81) at (7.2,0)
				
		    \vertex(91) at (8.6,0)
    
		    \vertex(101) at (9.4,0)
    
		    \vertex(111) at (10.2,0)
    
		    \vertex(121) at (11,0);
    
		\node at (-1.0,0) {$E$};
		\node at (3.8,0) {$F$};
		\node at (8.2,0) {$G$};

		\node at (4.4,0) {$\dots$};
		\node at (-0.4,0) {$\dots$};
		\node at (11.5,0) {$\dots$};

		\draw[style=semithick, -latex] (11.east)--(21.west);
		\draw[style=semithick, -latex] (21.east)--(31.west);
		\draw[style=semithick, -latex] (31.east)--(41.west);

		\draw[style=semithick, -latex] (51.east)--(61.west);
		\draw[style=semithick, -latex] (61.east)--(71.west);
		\draw[style=semithick, -latex] (71.east)--(81.west);
		
		\draw[style=semithick, -latex] (41.north east)
	        .. controls +(0,0.25) and +(0,0.25) ..
	        +(.5,0)
	        .. controls +(0,-0.25) and +(0,-0.25) ..
	        (41.south east);
		     
 		\draw[style=semithick, -latex] (91.east)--(101.west);
 		\draw[style=semithick, -latex] (101.east)--(111.west);
 		\draw[style=semithick, -latex] (111.east)--(121.west);   
		\end{tikzpicture}
	\end{equation*}
	The boundary path space of each of these 3 directed graphs is homeomorphic to $\N$ equipped with the discrete topology (there is for each vertex exactly one element of the boundary path space that begins at that vertex), and the 3 graphs are orbit equivalent. All the points in $\partial E$ are eventually periodic, all the points in $\partial F$ are finite, and all the points in $\partial G$ are wandering points.
	
	The groupoids $\G_F$ and $\G_G$ are both isomorphic to the discrete groupoid $\N\times\N$ with product defined by $(m_1,m_2)(n_1,n_2):=(m_1,n_2)$ if $m_2=n_1$ and undefined otherwise, and inverse given by $(n_1,n_2)^{-1}:=(n_2,n_1)$. The groupoid $\G_E$ is not isomorphic to $\G_F$ and $\G_G$ because $\{\eta\in\G_E:s(\eta)=r(\eta)=x\}$ is infinite for any $x\in\partial E$, and $\{\eta\in\G_F:s(\eta)=r(\eta)=x\}=\{(x,0,x)\}$ for any $x\in\partial F$.
\end{example}

\section{Orbit equivalence and isomorphism of groupoids}

Let $E$ and $F$ be directed graphs. We say that a homeomorphism $h:\partial E\to\partial F$ \emph{preserves isolated eventually periodic points} if $h$ maps the set of isolated eventually periodic points in $\partial E$ onto the set of isolated eventually periodic points in $\partial F$. In this section we state and prove our main result (Theorem~\ref{thm:b}) from which it follows that the groupoids of two directed graphs are isomorphic if and only if the two graphs are orbit equivalent by an orbit equivalence that preserves isolated eventually periodic points. 

\begin{remark}
	Since a directed graph $E$ satisfies condition (L) if and only if $\partial E$ contains no isolated eventually periodic points, it follows that if two directed graphs $E$ and $F$ both satisfy condition (L), then any homeomorphism $h:\partial E\to\partial F$ preserves isolated eventually periodic points.
\end{remark}

Let $E$ and $F$ be directed graphs and $\phi:\G_E\to\G_F$ be an isomorphism. Then $\phi$ maps $\G_E^0$ onto $\G_F^0$. It follows that there is a unique homeomorphism $h:\partial E\to\partial F$ such that $\phi((x,0,x))=(h(x),0,h(x))$ for all $x\in\partial E$. We call $h$ the \emph{homeomorphism induced by $\phi$}. 

We now present our main result which is a strengthening of \cite[Theorem 3.10]{AER}.

\begin{theorem}\label{thm:b}
	Let $E$ and $F$ be directed graphs. If $\phi:\G_E\to\G_F$ is an isomorphism, then the homeomorphism induced by $\phi$ is an orbit equivalence that preserves isolated eventually periodic points. Conversely, if $h:\partial E\to\partial F$ is an orbit equivalence that preserves isolated eventually periodic points, then there is an isomorphism $\phi:\G_E\to\G_F$ that induces $h$. Thus, $\G_E$ and $\G_F$ are isomorphic if and only if there is an orbit equivalence $h:\partial E\to\partial F$ that preserves isolated eventually periodic points.
\end{theorem}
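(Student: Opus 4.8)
The plan is to first note that $\phi$ maps $\G_E^0$ onto $\G_F^0$, which already produces the induced homeomorphism $h\colon\partial E\to\partial F$. To extract an orbit equivalence, I would work with the ``prepend an edge'' bisections: since $\partial E^{\ge 1}=\bigsqcup_{e\in E^1}Z(e)$ is a partition into compact open sets, the map $x\mapsto(x,1,\sigma_E(x))$ is a homeomorphism of each $Z(e)$ onto the open bisection $Z(Z(e),1,0,r(e)\partial E)$, and is therefore continuous on all of $\partial E^{\ge 1}$. Pushing through $\phi$ gives a continuous map $x\mapsto\phi((x,1,\sigma_E(x)))=(h(x),c_F(\phi((x,1,\sigma_E(x)))),h(\sigma_E(x)))\in\G_F$, so $\sigma_F^{m}(h(x))=\sigma_F^{n}(h(\sigma_E(x)))$ for appropriate $m,n$. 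To choose $m,n$ \emph{continuously}, I would cover $\G_F$ by basic bisections $Z(U,m,n,V)$, pull these back along the above continuous map, and refine the resulting open cover of $\partial E^{\ge1}$ to a countable partition into compact open sets (available because $\partial E$ is second countable with a basis of compact open sets); setting $l(x)=m$ and $k(x)=n$ on each piece yields locally constant, hence continuous, functions satisfying the first identity in \eqref{eq:1}. Applying the identical argument to $\phi^{-1}$ gives $k',l'$, so $h$ is an orbit equivalence. For the preservation statement I would use isotropy: as in the discussion before Proposition~\ref{pro:a}, the set $\{\eta:s(\eta)=r(\eta)=x\}$ is nontrivial exactly when $x$ is an infinite eventually periodic point (where it is $\cong\Z$). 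Since $\phi$ carries isotropy groups isomorphically onto isotropy groups, $h$ maps eventually periodic points onto eventually periodic points, and being a homeomorphism it maps isolated points onto isolated points; intersecting these two facts shows $h$ preserves isolated eventually periodic points.

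\textbf{Backward direction, construction of $\phi$.} Given $h,k,l,k',l'$, I would first iterate \eqref{eq:1} to build continuous functions $k_n,l_n\colon\partial E^{\ge n}\to\N$ with $\sigma_F^{k_n(x)}(h(\sigma_E^n(x)))=\sigma_F^{l_n(x)}(h(x))$, via $k_{n+1}(x)=k_n(x)+k(\sigma_E^n(x))$ and $l_{n+1}(x)=l_n(x)+l(\sigma_E^n(x))$. Then I would define, for $(x,m-n,y)\in\G_E$ with $\sigma_E^m(x)=\sigma_E^n(y)$, $\phi((x,m-n,y)):=(h(x),(l_m(x)-k_m(x))-(l_n(y)-k_n(y)),h(y))$; combining the two defining relations shows $\sigma_F^{k_n(y)+l_m(x)}(h(x))=\sigma_F^{k_m(x)+l_n(y)}(h(y))$, so this indeed lands in $\G_F$. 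Well-definedness is the first thing to check: the value is unchanged under $(m,n)\mapsto(m+1,n+1)$, because the two correction terms are $l(\sigma_E^m(x))-k(\sigma_E^m(x))$ and $l(\sigma_E^n(y))-k(\sigma_E^n(y))$, which agree since $\sigma_E^m(x)=\sigma_E^n(y)$; and any two representations of a fixed cocycle value differ by finitely many such simultaneous shifts. The homomorphism and inverse properties then follow directly from the formula (choosing a common middle representative at $y$), and continuity follows because on each basic bisection $Z(U,m,n,V)$ the indices $m,n$ are fixed and $\phi$ is expressed through the continuous maps $h,k_m,l_m,k_n,l_n$.

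\textbf{Backward direction, bijectivity (the main obstacle).} I expect the real difficulty to be showing $\phi$ is bijective. Because $\phi$ covers the injective map $h$ on units, injectivity reduces to injectivity on each isotropy group, and the only nontrivial isotropy occurs at eventually periodic $x$, where it is $\cong\Z$ generated by $(x,\lp(x),x)$. Writing $W(x):=\sum_{i=n}^{n+\lp(x)-1}\bigl(l(\sigma_E^i(x))-k(\sigma_E^i(x))\bigr)$ for $n$ large enough that $\sigma_E^n(x)$ is purely periodic, $\phi$ sends this generator to an element with cocycle $W(x)$, so the whole question is whether $\phi$ restricts to an \emph{isomorphism} of isotropy groups, i.e.\ whether $W(x)=\pm\lp(h(x))\ne 0$. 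For non-isolated eventually periodic $x$ I would argue this from the orbit equivalence alone: the cocycles $k,l$ are locally constant near $x$, and the exit of the cycle supplies nearby distinct points whose images under $h$ accumulate at $h(x)$, forcing $W(x)\ne 0$. For isolated eventually periodic $x$ this is precisely where the hypothesis is indispensable: it guarantees that $h(x)$ is again an isolated eventually periodic point, so that the target isotropy is $\cong\Z$, and then the cocycle identities \eqref{eq:1} for $h$ and $h^{-1}$ around the two cycles force the two windings to be reciprocal, yielding degree $\pm1$. Finally, I would obtain surjectivity and the homeomorphism property together by building $\phi'$ from $(h^{-1},k',l')$ by the symmetric formula and verifying $\phi'\circ\phi=\mathrm{id}_{\G_E}$ and $\phi\circ\phi'=\mathrm{id}_{\G_F}$ --- on units and non-periodic points this is a direct consequence of \eqref{eq:1}, while on periodic points it is exactly the degree-$\pm1$ computation above. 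The concluding biconditional of the theorem is then the conjunction of the two directions.
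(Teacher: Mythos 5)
Your forward direction is essentially the paper's own argument: you push the prepend-an-edge bisections through $\phi$, refine the resulting cover to a partition into compact open sets to make $k,l$ locally constant, and read off preservation of eventually periodic points from the isotropy groups. Likewise, on elements $\eta$ whose source is \emph{not} an isolated eventually periodic point, your explicit cocycle formula $(l_m(x)-k_m(x))-(l_n(y)-k_n(y))$ produces exactly the integer that the paper extracts via Lemma~\ref{lem:1} (uniqueness there shows the two definitions agree), so that part is sound, if more computational than the paper's pseudogroup formulation.

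The genuine gap is your treatment of isolated eventually periodic points. You assert that the identities \eqref{eq:1} for $h$ and $h^{-1}$ ``force the two windings to be reciprocal, yielding degree $\pm1$'', i.e.\ that $W(x)=\pm\lp(h(x))\ne 0$. This is false: at an isolated eventually periodic point, \eqref{eq:1} constrains $k$ and $l$ only modulo the relevant least periods, so $W(x)$ can vanish. Concretely, let $E=F$ be a single cycle of length $2$, so $\partial E=\{x,x'\}$ with $x'=\sigma_E(x)$, take $h=\mathrm{id}$, and set $l(x)=1$, $k(x)=0$, $l(x')=0$, $k(x')=1$ (and $k'=k$, $l'=l$). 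Then \eqref{eq:1} holds and $h$ trivially preserves isolated eventually periodic points, but $W(x)=(1-0)+(0-1)=0$, so your formula sends the isotropy generator $(x,2,x)$ to the unit $(x,0,x)$ and your $\phi$ is not injective. (A single vertex with one loop, where \eqref{eq:1} is vacuous and $k=l=0$ is admissible, kills the formula even faster.) This is precisely why the paper abandons the cocycle formula on isolated eventually periodic points and instead builds $\phi(\eta)$ by hand there: it chooses a periodic base point $x_W$ in each orbit $W=[x]$, measures points by $j_x=\min\{j\in\N:\sigma_E^j(x)=x_W\}$, extracts the winding number $n_\eta$ from $j_{r(\eta)}-j_{s(\eta)}-c_E(\eta)=n_\eta\lp(x_{s(\eta)})$, and transplants it to $\G_F$ as $j_{h(r(\eta))}-j_{h(s(\eta))}-n_\eta\lp(y_{r(\eta)})$. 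The hypothesis that $h$ preserves isolated eventually periodic points is used to guarantee that the target orbit again has $\Z$-isotropy so this transplantation makes sense; it does not rigidify $k$ and $l$. The Remark following Theorem~\ref{thm:b} underlines the point: when isolated eventually periodic points exist, the isomorphism inducing $h$ is not unique, so no formula built solely from the orbit-equivalence data can be expected to produce it.
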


\begin{remark}
	By combining Theorem~\ref{thm:b} with \cite[Proposition 3.4 and Theorem 5.1]{BCW} (cf. \cite[Proposition 3.8]{AER}) and \cite[Theorem 4.2]{CRS}, we recover \cite[Proposition 3.9, Proposition 4.2, Theorem 4.3, Theorem 5.3, and Corollary 5.4]{AER}.
\end{remark}

For the proof of Theorem~\ref{thm:b}, we need the following lemma which is an adaptation of \cite[Lemma 4.3(2)]{CEOR}.

\begin{lemma}\label{lem:1}
	Suppose $E$ is a directed graph, $\alpha\in\mathcal{P}_E$, and $x\in s(\alpha)$. Then there is an $n\in\mathbb{Z}$ that has the property that there is an open neighbourhood $U$ of $x$ and $k,l\in\mathbb{N}$ such that $U\subseteq s(\alpha)$, $n=l-k$, and $\sigma_E^k(\alpha(x'))=\sigma_E^l(x')$ for all $x'\in U$. If $x$ is not an isolated eventually periodic point, then this $n$ is unique.
\end{lemma}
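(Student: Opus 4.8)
My plan is to treat existence and uniqueness separately. For existence I would unpack the definition of $\mathcal{P}_E$: write $\alpha=\alpha_A$ for an open bisection $A$ of $\G_E$, and let $\eta$ be the unique element of $A$ with $s(\eta)=x$, so that $r(\eta)=\alpha(x)$ and $\eta=(\alpha(x),c,x)$ for some $c\in\Z$. Since $A$ is open and the sets $Z(U',a,b,V')$ form a basis for the topology of $\G_E$, I can choose such a basic set with $\eta\in Z(U',a,b,V')\subseteq A$. The defining conditions give $x\in V'$, $\sigma_E^a(U')=\sigma_E^b(V')$, and injectivity of $\sigma_E^a$ on $U'$ and of $\sigma_E^b$ on $V'$; from the equality of ranges together with injectivity it follows that $s(Z(U',a,b,V'))=V'$, whence $V'\subseteq s(A)=s(\alpha)$. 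Setting $U:=V'$, $k:=a$, $l:=b$ and $n:=l-k$, the bisection property forces $\alpha(x')$ to be the unique element of $U'$ satisfying $\sigma_E^a(\alpha(x'))=\sigma_E^b(x')$ for every $x'\in U$, which is exactly the asserted identity.

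For uniqueness, suppose $n_1=l_1-k_1$ and $n_2=l_2-k_2$ both have the stated property, with neighbourhoods $U_1,U_2$. On the open neighbourhood $U:=U_1\cap U_2$ of $x$ both identities hold simultaneously. Computing $\sigma_E^{k_1+k_2}(\alpha(x'))$ in two ways (applying $\sigma_E^{k_2}$ to the first identity and $\sigma_E^{k_1}$ to the second), I obtain $\sigma_E^{l_1+k_2}(x')=\sigma_E^{l_2+k_1}(x')$ for all $x'\in U$. If $n_1\neq n_2$, then $p:=l_1+k_2$ and $q:=l_2+k_1$ satisfy $p-q=n_1-n_2\neq 0$; taking $n_1>n_2$ without loss of generality, this reads $\sigma_E^p|_U=\sigma_E^q|_U$ with $p>q$. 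In particular $\sigma_E^p(x)=\sigma_E^q(x)$ shows that $x$ is eventually periodic, and on the open set $W:=\sigma_E^q(U)$ (open because $\sigma_E^q$ is a local homeomorphism) we have $\sigma_E^{p-q}(w)=w$ for all $w\in W$.

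The crux is then to show that such a $W$ consists entirely of isolated points; this is the part I expect to require the most care. I would isolate the claim that if $W\subseteq\partial E$ is open and $\sigma_E^{d}(w)=w$ for all $w\in W$ (with $d:=p-q\geq 1$), then every point of $W$ is isolated. Given $w\in W$, choose a basic neighbourhood $Z(\tau\setminus F)\subseteq W$ with $\tau$ a prefix of the (necessarily infinite, $d$-periodic) path $w$. If $w$ were not isolated, then by the characterisation of isolated eventually periodic points recorded before Proposition~\ref{pro:a} its generating cycle has an exit, so there is a position $j\geq\max(|\tau|,d)$ with $s(w_j)$ emitting an edge $e\neq w_j$; extending $w_0\cdots w_{j-1}e$ to a boundary path $w'$ keeps $w'\in Z(\tau\setminus F)\subseteq W$, yet the $d$-periodicity of both $w$ and $w'$ forces $w'_j=w'_{j-d}=w_{j-d}=w_j$, contradicting $w'_j=e\neq w_j$. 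Hence $z:=\sigma_E^q(x)\in W$ is isolated, and pulling this back along the local homeomorphism $\sigma_E^q$ shows that $x$ is isolated. Thus $x$ would be an isolated eventually periodic point, contrary to hypothesis, and therefore $n_1=n_2$.
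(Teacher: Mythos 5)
Your proof is correct, and its skeleton matches the paper's up to the last step. For existence the paper simply cites \cite[Proposition 3.3]{BCW}; your unwinding of the basic open sets $Z(U',a,b,V')$ of $\G_E$ is in effect a direct proof of that citation and works (the identity $s(Z(U',a,b,V'))=V'$ already follows from $\sigma_E^a(U')=\sigma_E^b(V')$; injectivity is what you need for the bisection property). For uniqueness, both you and the paper first combine the two identities to get $\sigma_E^{l_1+k_2}(x')=\sigma_E^{l_2+k_1}(x')$ on a neighbourhood of $x$, so that a whole open set of points around $\sigma_E^q(x)$ is periodic with period $d=|n_1-n_2|>0$. The two arguments then diverge. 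The paper finishes positively and without mentioning exits: it chooses a cylinder $Z(x_{[0,r]})\subseteq U_1\cap U_2$ with $r$ exceeding $m+p$, and notes that any $x'$ in that cylinder agrees with $x$ on the first $m+p$ edges while $\sigma_E^m(x')$ and $\sigma_E^m(x)$ are both $p$-periodic, forcing $x'=x$; hence the cylinder is $\{x\}$. You instead argue by contradiction via the exit characterisation of isolated eventually periodic points recorded before Proposition~\ref{pro:a}: a non-isolated periodic point's cycle has an exit, and following that exit produces a point of the open set $W$ that cannot be $d$-periodic. Both are valid; the paper's is marginally more self-contained, while yours makes the mechanism (an exit lets you escape the forced periodicity) more transparent. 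One small slip to repair: you allow $j=|\tau|$, in which case $w'_{|\tau|}=e$ and nothing prevents $e\in F$, so $w'$ need not lie in $Z(\tau\setminus F)$. Since the exit position recurs modulo $d$, simply choose $j>|\tau|$; then $w'_{|\tau|}=w_{|\tau|}\notin F$ and the contradiction goes through.
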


\begin{proof}
	The existence of an $n$ with the desired properties follows from \cite[Proposition 3.3]{BCW}. To prove uniqueness of $n$ if $x$ is not an isolated periodic point, suppose that $n_1,n_2\in\mathbb{Z}$, $n_1\ne n_2$, $U_1$ and $U_2$ are open neighbourhoods of $x$ such that $U_1,U_2\subseteq s(\alpha)$, and that $k_1,k_2,l_1,l_2\in\mathbb{N}$ such that for $i=1,2$ we have $n_i=l_i-k_i$ and $\sigma_E^{k_i}(\alpha(x'))=\sigma_E^{l_i}(x')$ for all $x'\in U_i$. Let $k_{\max}:=\max\{k_1,k_2\}$, $l'_i:=l_i-k_i+k_{\max}$ for $i=1,2$, and $m:=\min\{l'_1,l'_2\}$ and $p:=\max\{l'_1,l'_2\}-m$. Then $l'_1=n_1+k_{\max}\ne n_2+k_{\max}=l'_2$, so $p>0$. Since
	\begin{equation*}
		\sigma_E^{l'_1}(x')=\sigma_E^{k_{\max}}(\alpha(x'))=\sigma_E^{l'_2}(x')
	\end{equation*}
	for all $x'\in U_1\cap U_2$, it follows that every element of $\sigma_E^m(U_1\cap U_2)$ is periodic with period $p$. In particular, $x$ is eventually periodic. Choose $r>m+p$ such that $Z(x_{[0,r]})\subseteq U_1\cap U_2$. Suppose $x'\in Z(x_{[0,r)})$. Then $x_{[0,m+p)}=x'_{[0,m+p)}$ and both $\sigma_E^m(x)$ and $\sigma_E^m(x')$ are periodic with period $p$. It follows that $x'=x$, and thus that $x$ is an isolated eventually periodic point.
\end{proof}

\begin{proof}[Proof of Theorem~\ref{thm:b}]
	The proof uses ideas from the proof of \cite[Proposition 4.5]{CEOR}.
	
	Suppose first that $\phi:\G_E\to\G_F$ is an isomorphism. We will prove that the homeomorphism $h:\partial E\to\partial F$ induced by $\phi$ is an orbit equivalence by constructing continuous functions $k,l:\partial E^{\ge 1}\to\N$ and $k',l':\partial F^{\ge 1}\to\N$ satisfying \eqref{eq:1}. 
	
	Let $e\in E^1$. Then $Z(Z(r(e)),0,1,Z(e))$ is a compact and open bisection of $\G_E$. It follows that $A_e:=\phi((Z(r(e)),0,1,Z(e)))$ is a compact and open bisection of $\G_F$. Let $x\in Z(e)$. Then $(\sigma_E(x),-1,x)\in Z(Z(r(e)),0,1,Z(e))$, so $\alpha_{A_e}(h(x))=h(\sigma_E(x))$. It follows from \cite[Proposition 3.3]{BCW} that there is an open neighborhood $V'_x$ of $h(x)$  and $k_x,l_x\in\mathbb{N}$ such that $V'_x\subseteq s(A_e)$ and $\sigma_F^{l_x}(h(x'))=\sigma_F^{k_x}(\alpha_{A_e}(h(x')))=\sigma_F^{k_x}(h(\sigma_E(x')))$ for all $x'\in h^{-1}(V'_x)$. Let $V_x:=h^{-1}(V'_x)$. Since $Z(e)$ is compact, and $\partial E$ is totally disconnected, it follows that there is a finite set $F_x$ of mutually disjoint compact and open sets such that $Z(e)=\bigcup_{B\in F_x}B$ and such that there for each $B\in F_x$ is an $x_B\in Z(e)$ such that $B\subseteq V_{x_B}$. Define functions $k_e,l_e:Z(e)\to\mathbb{N}$ by setting $k_e(x):=k_{x_B}$ and $l_e(x):=l_{x_B}$ for $x\in B$. Then $k_e$ and $l_e$ are continuous and $\sigma_F^{k_e(x)}(h(\sigma_E(x)))=\sigma_F^{l_e(x)}(h(x))$ for all $x\in Z(e)$. By doing this for each $e\in E^1$ we get continuous functions $k,l:\partial E^{\ge 1}\to\N$ satisfying \eqref{eq:1}. 
	
	Continuous functions  $k',l':\partial F^{\ge 1}\to\N$ satisfying \eqref{eq:1} can be constructed in a similar way. Thus, $h$ is an orbit equivalence. Since $x$ is eventually periodic if and only if $\{\eta\in\G_E:r(\eta)=s(\eta)=x\}$ is infinite, and $h(x)$ is eventually periodic if and only if $\{\eta\in\G_F:r(\eta)=s(\eta)=h(x)\}$ is infinite, it follows that $h$ maps eventually period points to eventually periodic points. Since, $h$ is a homeomorphism, it follows that $h$ preserves isolated eventually periodic points.
	
	For the converse, suppose $h:\partial E\to\partial F$ is an orbit equivalence that preserves isolated eventually periodic points. We will construct an isomorphism $\phi:\G_E\to\G_F$ that induces $h$. We first define $\phi(\eta)$ when $s(\eta)$ is not an isolated eventually periodic point. 
	
	Let $\eta\in\G_E$ and suppose $s(\eta)$ is not an isolated eventually periodic point. Then $h(s(\eta))$ is not an isolated eventually periodic point by assumption. Choose an open bisection $A$ such that $\eta\in A$. According to the proof of \cite[Proposition 3.4]{BCW}, the homeomorphism $h\circ\alpha_A\circ h^{-1}:h(s(A))\to h(r(A))$ belongs to $\mathcal{P}_F$. It follows from Lemma~\ref{lem:1} that there is a unique $n\in\mathbb{Z}$ with the property that there is an open neighbourhood $U$ of $h(s(\eta))$ and $k,l\in\mathbb{N}$ such that $U\subseteq h(s(A))$, $n=l-k$, and $\sigma_F^k(h(\alpha_A(h^{-1}(y))))=\sigma_F^l(y)$ for all $y\in U$. This $n$ does not depend of the choice of the open bisection $A$ because if $A'$ is another open bisection containing $\eta$, and $n'\in\mathbb{Z}$ has the property that there is an open neighbourhood $U'$ of $h(s(\eta))$ and $k',l'\in\mathbb{N}$ such that $U'\subseteq h(s(A'))$, $n'=l'-k'$, and $\sigma_F^{k'}(h(\alpha_{A'}(h^{-1}(y))))=\sigma_F^{l'}(y)$ for all $y\in U'$, then $A\cap A'$ is an open bisection containing $\eta$ and $\alpha_A(x')=\alpha_{A\cap A'}(x')=\alpha_{A'}(x')$ for all $x'\in h^{-1}(U\cap U')$, so it follows from the uniqueness of $n$ that $n'=n$. We set $\phi(\eta):=(h(r(\eta)),n,h(s(\eta)))$.
	
	We then define $\phi(\eta)$ when $s(\eta)$ is an isolated eventually periodic point. For each isolated eventually periodic point $x\in \partial E$, let $[x]:=\{x'\in\partial E:\exists \eta'\in\G_E\text{ such that }r(\eta')=x\text{ and }s(\eta')=x'\}$. Then every $x'\in [x]$ is an isolated eventually periodic point, and $[x]$ contains a periodic point. 
	
	 For each equivalence class $W\in\{[x]:x\text{ is an isolated eventually periodic point in }X\}$, choose a periodic point $x_W\in W$. For $x\in W$, let $j_x:=\min\{j\in\mathbb{N}:\sigma_E^j(x)=x_W\}$. If $\eta'\in\G_E$ and $s(\eta')$ is an isolated eventually periodic point, then so is $r(\eta')$. Furthermore, $[r(\eta')]=[s(\eta')]$ and $j_{r(\eta')}-j_{s(\eta')}-c_E(\eta')=n\lp(x_{s(\eta')})$ for some $n\in\mathbb{Z}$. We write $n_{\eta'}$ for this $n$. 
	
	Similarly, for each isolated eventually periodic point $y\in \partial F$, let $[y]:=\{y'\in\partial F:\exists \eta'\in\G_F\text{ such that }r(\eta')=y\text{ and }s(\eta')=y'\}$, choose for each equivalence class $W'\in\{[y]:y\text{ is an isolated eventually periodic point in }Y\}$ a periodic point $y_{W'}\in W'$, and let for $y\in W'$, $j_y:=\min\{j\in\mathbb{N}:\sigma_F^j(y)=y_{W'}\}$. 
	
	If $\eta\in\G_E$ and $s(\eta)$ is an isolated eventually periodic point, then so is $r(\eta)$. It follows by assumption that $h(s(\eta))$ and $h(r(\eta))$ are isolated eventually periodic points, and 
$$\bigl(h(r(\eta)),j_{h(r(\eta))}-j_{h(s(\eta))}-n_\eta\lp(y_{r(\eta)}),h(s(\eta)\bigr)\in\G_F.$$ 
We set $\phi(\eta):=(h(r(\eta)),j_{h(r(\eta))}-j_{h(s(\eta))}-n_\eta\lp(y_{r(\eta)}),h(s(\eta))$.
	
	We have now constructed a map $\phi:\G_E\to\G_F$ such that $s(\phi(\eta))=h(s(\eta))$ and $r(\phi(\eta))=h(r(\eta))$ for all $\eta\in\G_E$. It is routine to check that $\phi$ is bijective and a groupoid homomorphism. That $\phi$ and $\phi^{-1}$ are continuous can be proved similarly to how it is proved that $\phi$ is continuous in the proof of \cite[Proposition 4.5]{CEOR}. Thus, $\phi$ is an isomorphism that induces $h$.
\end{proof}

\begin{remark}
	Let $E$ and $F$ be directed graphs and $h:\partial E\to\partial F$ an orbit equivalence that preserves isolated eventually periodic points. By studying the proof of Theorem~\ref{thm:b}, one sees that there is a unique isomorphism $\phi:\G_E\to\G_F$ that induces $h$ if and only if there are no isolated eventually periodic point in $\partial E$ (i.e, if and only if $E$ satisfies condition (L)), in which case there are no isolated eventually periodic point in $\partial F$ either. 
	
\end{remark}


As in \cite{CRS}, we denote by $SE$ the directed graph obtained by attaching a head to every vertex of a directed graph $E$ (see \cite[Definition 4.2]{Tomforde}).

\begin{corollary}\label{cor:1}
	Suppose $E$ and $F$ are directed graphs with finitely many vertices and no sinks. Then any homeomorphism between $\partial E$ and $\partial F$ and any homeomorphism between $\partial SE$ and $\partial SF$ preserves isolated eventually periodic points. Thus, $\G_E$ and $\G_F$ are isomorphic if and only if $E$ and $F$ are orbit equivalent, and $\G_{SE}$ and $\G_{SF}$ are isomorphic if and only if $SE$ and $SF$ are orbit equivalent.
\end{corollary}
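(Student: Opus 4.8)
The plan is to reduce everything to one observation: under the stated hypotheses the isolated eventually periodic points of the relevant boundary path spaces are precisely \emph{all} of their isolated points. Once this is established, the first assertion is immediate, since a homeomorphism carries isolated points bijectively onto isolated points; so if in both source and target the isolated points coincide with the isolated eventually periodic points, then any homeomorphism automatically preserves isolated eventually periodic points. The groupoid statements then follow by feeding this into Theorem~\ref{thm:b}.

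To show that every isolated point of $\partial E$ is eventually periodic when $E$ has finitely many vertices and no sinks, I would invoke the trichotomy of Proposition~\ref{pro:a}. First, since $E$ has no sinks there are no finite isolated points (a finite path is isolated only when its range is a sink). Second, since $E^0$ is finite there are no wandering points at all: for any infinite path $x$ the sources $s(x_n)$ range over the finite set $E^0$, so by pigeonhole some $v$ has $\{n:s(x_n)=v\}$ infinite. Hence the only surviving class in Proposition~\ref{pro:a} is the eventually periodic one, and the isolated points of $\partial E$ are exactly its isolated eventually periodic points; the same applies verbatim to $\partial F$.

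The case of $SE$ and $SF$ is the main obstacle, because attaching a head to every vertex makes the vertex set infinite, so the pigeonhole step no longer applies and wandering points must be excluded by hand. The key is the structure of the head from \cite[Definition 4.2]{Tomforde}: each head is a one-sided chain $\cdots\to w_2\to w_1\to v$ of regular vertices feeding into a vertex $v$ of $E$, and there are no edges from $E^0$ back into any head. Consequently every infinite boundary path of $SE$ traverses at most a finite initial segment of a single head before entering $E^0$, after which it stays in $E^0$ forever; since $E^0$ is finite, such a path again repeats some vertex infinitely often and so is not wandering. As $SE$ also has no sinks (the head vertices each emit one edge and the original out-degrees are unchanged), the same reasoning shows that the isolated points of $\partial SE$ are exactly its isolated eventually periodic points, and likewise for $\partial SF$. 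I would be careful to state the head construction precisely to make the ``stays in $E^0$'' claim airtight, as this is where the argument genuinely differs from the $E,F$ case.

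Finally, for the groupoid statements I would combine the above with Theorem~\ref{thm:b}. Because every homeomorphism $\partial E\to\partial F$ preserves isolated eventually periodic points, the qualifier in Theorem~\ref{thm:b} is vacuous for graphs of this kind: an orbit equivalence is in particular a homeomorphism, so it automatically preserves isolated eventually periodic points, while conversely any isomorphism $\G_E\to\G_F$ induces such an orbit equivalence by the theorem. Thus Theorem~\ref{thm:b} collapses to the assertion that $\G_E$ and $\G_F$ are isomorphic if and only if $E$ and $F$ are orbit equivalent. Applying the identical argument to the pair $SE,SF$, for which the preceding paragraph supplies the required preservation statement, yields that $\G_{SE}$ and $\G_{SF}$ are isomorphic if and only if $SE$ and $SF$ are orbit equivalent.
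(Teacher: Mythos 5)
Your proposal is correct and follows essentially the same route as the paper: use Proposition~\ref{pro:a} to see that, in the absence of sinks and wandering points, all isolated points are eventually periodic, so every homeomorphism preserves them, and then invoke Theorem~\ref{thm:b}. The only difference is that you spell out the pigeonhole argument and the structure of the attached heads, details the paper's proof leaves implicit.
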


\begin{proof}
	Since $E$ and $F$ have finitely many vertices and no sinks, $\partial E$, $\partial F$, $\partial SE$, and $\partial SF$ contain no isolated finite paths, and no wandering paths. It therefore follows from Proposition~\ref{pro:a} that every isolated point in $\partial E$, $\partial F$, $\partial SE$, and $\partial SF$ is eventually periodic. Since a homeomorphism maps isolated points to isolated points, it follows that any homeomorphism between $\partial E$ and $\partial F$ and any homeomorphism between $\partial SE$ and $\partial SF$ preserves isolated eventually periodic points. The rest of the corollary then follows from Theorem~\ref{thm:b}.
\end{proof}

\end{document}